\documentclass[11pt]{article}

\usepackage[T1]{fontenc}
\usepackage{lmodern}
\usepackage{amsmath,amssymb,amsthm,mathtools}
\usepackage{microtype}
\usepackage{tikz}
\usepackage[margin=1in]{geometry}
\usepackage{enumitem}
\usepackage[numbers,sort&compress]{natbib}
\usepackage[hidelinks]{hyperref}
\hypersetup{pdftitle={Graphs with Long Pseudosimilarity Chains under Consecutive Vertex Deletions},pdfauthor={Sergey Ivanov}}

\newtheorem{theorem}{Theorem}[section]
\newtheorem{lemma}[theorem]{Lemma}

\newtheorem{corollary}[theorem]{Corollary}
\theoremstyle{definition}
\newtheorem{definition}[theorem]{Definition}
\newtheorem{example}[theorem]{Example}
\newtheorem{problem}[theorem]{Problem}
\theoremstyle{remark}

\newcommand{\Aut}{\operatorname{Aut}}
\newcommand{\psd}{\operatorname{psd}}
\newcommand{\PS}{\operatorname{PS}}
\newcommand{\Z}{\mathbb{Z}}

\title{Graphs with Long Pseudosimilarity Chains under Consecutive Vertex Deletions}
\author{Sergey Ivanov}
\date{}

\begin{document}
\maketitle

\begin{abstract}
Pseudosimilar vertices are vertices in distinct automorphism orbits whose deletions produce isomorphic graphs. Classical work has studied the existence, group-theoretic origin, and construction of large sets of such vertices. We ask a different recursive question: how long can one repeatedly delete a vertex that is pseudosimilar at the moment of deletion? We define the pseudosimilarity depth of a graph and construct connected graphs in which this process continues through all but a sublinear number of vertices. A two-clock construction gives a square-root deficit uniformly in the order, while a Chinese-remainder construction with many cyclic clocks yields an infinite family of asymmetric graphs with only a polylogarithmic number of vertices left outside the active chain. The mechanism realizes pseudosimilarity by breaking a long hidden automorphism orbit and enlarging the break one vertex at a time. Thus pseudosimilarity can persist through an asymptotically full sequence of vertex deletions, even though every graph encountered in the main construction is asymmetric.
\end{abstract}

\section{Introduction}

For a finite simple graph $G$ and vertices $u,v\in V(G)$, call $u$ and $v$ \emph{similar} if some automorphism of $G$ maps $u$ to $v$. They are \emph{removal-similar} if
\[
    G-u \cong G-v,
\]
and \emph{pseudosimilar} if they are removal-similar but not similar. The distinction is elementary but important: two vertices can occupy genuinely different roles in the intact graph while becoming indistinguishable after either one is deleted.

Pseudosimilarity arose in the early reconstruction literature by Harary and Palmer who exhibited the phenomenon \cite{HararyPalmer1966}. Kimble, Schwenk, and Stockmeyer developed the subject systematically, constructing asymmetric graphs in which every vertex has a pseudosimilar mate and graphs with arbitrarily large mutually pseudosimilar sets \cite{KimbleSchwenkStockmeyer1981}. Godsil and Kocay showed that every finite pair of pseudosimilar vertices arises from a general ambient-automorphism construction \cite{GodsilKocay1982}; Kocay subsequently studied attachment operations and gave a group-theoretic treatment of mutually pseudosimilar sets \cite{KocayAttaching1984,Kocay1984}. Pseudosimilarity in trees was characterized by Kirkpatrick, Klawe, and Corneil \cite{KirkpatrickKlaweCorneil1983}. Large sets and related extremal constructions were studied further by Lauri \cite{Lauri1996,Lauri1997,Lauri2003}; a modern account appears in the monograph of Lauri and Scapellato \cite{LauriScapellato2016}. More broadly, quantitative reconstruction parameters ask how many vertex- or edge-cards suffice to determine a graph \cite{Lauri1993}; degree-associated versions additionally record the degree of the deleted vertex or edge. Deck-overlap questions instead ask how many cards two nonisomorphic graphs may share, and Ivanov recently showed that this overlap can be an arbitrarily large fraction of the full vertex deck \cite{Ivanov2026}.

The existing literature is primarily horizontal: how many vertices of one graph can be pseudosimilar, or how can a prescribed set of pseudosimilar vertices be constructed? Here we ask a vertical question. Suppose that $v$ is pseudosimilar to another vertex of $G$ and we delete $v$. Can the resulting card again contain a pseudosimilar vertex that can be deleted, and can this continue for many successive levels? A one-step precursor is already present in Lauri's reconstruction survey: for a pseudosimilar pair $u,v$, either a specific interchange-symmetry obstruction appears in $G-u-v$, or one of the cards contains a new pseudosimilar pair \cite{Lauri1993}. We are not aware of a parameter in the literature that measures how long this phenomenon can be iterated.

We introduce such a parameter. A deletion is called \emph{active} if the vertex being deleted has a pseudosimilar mate at that moment. The pseudosimilarity depth $\psd(G)$ is the maximum number of consecutive active deletions starting from $G$, and
\[
    \PS(n)=\max\{\psd(G): |V(G)|=n\}.
\]
The main result is that the depth can be asymptotically as large as the graph itself. More precisely, we prove that for all sufficiently large $n$,
\[
    \PS(n) \ge n-O(\sqrt n),
\]
so $\PS(n)/n\to1$. The graphs giving the basic bound are connected. We then refine the construction on an infinite sequence of orders: there are connected asymmetric graphs $G$ for which
\[
    \psd(G) \ge |V(G)|-
    O\!\left(\frac{(\log |V(G)|)^2}{\log\log |V(G)|}\right),
\]
and every intermediate graph in the active chain is asymmetric as well.

The mechanism is simple. We first build a small graph with a cyclic automorphism, which we call a \emph{clock}. A large family of additional vertices is indexed by positions on a hidden cyclic orbit. Deleting a consecutive interval of these vertices destroys every nontrivial automorphism, but the two vertices immediately adjacent to the gap remain removal-similar because deleting either one merely shifts the gap by one position. One clock gives a linear but small fraction of active deletions. Two relatively prime clocks encode a product number of positions with only a sum number of auxiliary vertices, giving the square-root deficit. Many clocks and the Chinese remainder theorem reduce the auxiliary part to polylogarithmic size.

\section{Active deletion chains}

We begin by fixing the recursive notion used throughout.

\begin{definition}
Let $G_0=G$. An \emph{active pseudosimilarity chain of length $k$} is a sequence of distinct vertices $v_1,\ldots,v_k$ such that, for each $j=0,\ldots,k-1$, if
\[
    G_j=G-\{v_1,\ldots,v_j\},
\]
then $v_{j+1}$ has a pseudosimilar mate in $G_j$. We set
\[
    \psd(G)=\max\{k: G\text{ has an active pseudosimilarity chain of length }k\}.
\]
For $n\ge1$, let
\[
    \PS(n)=\max_{|V(G)|=n}\psd(G).
\]
\end{definition}

The word active is important. If one only asks that a pseudosimilar pair survive somewhere while unrelated vertices are deleted, then arbitrary padding makes long chains immediate. In an active chain, the deleted vertex itself participates in the deletion ambiguity at every step.

The classical smallest graph with a pseudosimilar pair has eight vertices; the example goes back to Harary and Palmer and is recorded by Kimble, Schwenk, and Stockmeyer \cite{KimbleSchwenkStockmeyer1981}. Consequently,
\begin{equation}\label{eq:trivial-upper}
    \PS(n)\le n-7.
\end{equation}
A proof of this elementary bound is included in Appendix~\ref{app:upper-bound}. Our aim is to show that, asymptotically, it is nearly attainable.

The construction is based on a broken cyclic orbit. The following observation isolates the removal-similarity part of the argument.

\begin{lemma}[Sliding-gap lemma]\label{lem:sliding-gap}
Let $H$ have an automorphism $\rho$ and let $x_0,\ldots,x_{L-1}$ be vertices satisfying $\rho(x_j)=x_{j+1}$, with indices modulo $L$. For
\[
    G_i=H-\{x_0,\ldots,x_{i-1}\},\qquad 1\le i\le L-2,
\]
the vertices $x_i$ and $x_{L-1}$ are removal-similar in $G_i$.
\end{lemma}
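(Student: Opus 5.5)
The plan is to write down an explicit isomorphism $G_i-x_i\to G_i-x_{L-1}$ built from the automorphism $\rho$. One side of the comparison is
\[
    G_i-x_i=H-\{x_0,\ldots,x_i\},
\]
and the other is
\[
    G_i-x_{L-1}=H-\{x_{L-1},x_0,\ldots,x_{i-1}\}.
\]
Both are induced subgraphs of $H$ obtained by deleting a block of $i+1$ cyclically consecutive orbit vertices. The first block is $\{x_0,\ldots,x_i\}$ and the second is $\{x_{L-1},x_0,\ldots,x_{i-1}\}$, the first block shifted back by one position. Since $\rho(x_j)=x_{j+1}$ with indices modulo $L$, we have $\rho^{-1}(x_j)=x_{j-1}$. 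Hence $\rho^{-1}$ maps the set $\{x_0,\ldots,x_i\}$ onto $\{x_{L-1},x_0,\ldots,x_{i-1}\}$.

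The key step is then the standard fact that an automorphism $\sigma$ of $H$ which maps a vertex set $S$ onto $T$ restricts to an isomorphism $H-S\cong H-T$. It maps $V(H)\setminus S$ bijectively onto $V(H)\setminus T$ and preserves adjacency, because it does so on all of $H$ and both subgraphs are induced. Apply this with $\sigma=\rho^{-1}$ and $S=\{x_0,\ldots,x_i\}$. This gives $G_i-x_i\cong G_i-x_{L-1}$. One could equally use $\rho$ in the other direction.

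The only points needing care are bookkeeping ones, and this is where I expect the sole (minor) obstacle. First, the listed vertices must really be distinct, so that the deleted sets have the claimed sizes and both $x_i$ and $x_{L-1}$ actually lie in $G_i$. I will read the hypothesis, as the intended setting clearly requires, as saying $x_0,\ldots,x_{L-1}$ are $L$ distinct vertices forming a $\rho$-cycle. Second, the range $1\le i\le L-2$ guarantees $i\ne L-1$ and $x_{L-1}\notin\{x_0,\ldots,x_{i-1}\}$. So $x_i$ and $x_{L-1}$ are two distinct vertices of $G_i$ and the statement is non-vacuous. The lemma asserts only removal-similarity, not pseudosimilarity, so no claim about automorphisms of $G_i$ is needed. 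That part is deferred to the later construction.
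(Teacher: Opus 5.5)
Your proposal is correct and is essentially the paper's proof: both write the two cards as $H$ minus the blocks $\{x_0,\ldots,x_i\}$ and $\{x_{L-1},x_0,\ldots,x_{i-1}\}$, and note that a one-step shift by $\rho^{\pm1}$ carries one block onto the other, so it restricts to an isomorphism of the cards. Your remark that $x_0,\ldots,x_{L-1}$ must be distinct is a reasonable explicit reading of a hypothesis the paper leaves implicit.
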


\par\noindent The proof is given in Appendix~\ref{app:sliding-gap}.

The remaining task is to destroy all automorphisms that could exchange the two endpoints of the gap. Before presenting the cyclic gadget in general, we begin with its smallest instance.

\section{A 17-vertex example}

We first display the smallest instance of the construction.  Let the vertices $a_0,\ldots,a_5$ form a $6$-cycle, let $b_0,\ldots,b_5$ be independent with
\[
    N(b_i)=\{a_i,a_{i+1},a_{i+3}\},
\]
where subscripts are read modulo $6$, and add pendant vertices $x_0,\ldots,x_5$ with $x_i$ adjacent only to $a_i$.  Delete $x_0$ and call the resulting graph $G_1$.

\begin{example}[A 17-vertex chain]\label{ex:seventeen}
The graph $G_1$ has $17$ vertices and $29$ edges.  For $1\le i\le5$, set
\[
    G_i=G_1-\{x_1,\ldots,x_{i-1}\}.
\]
Then $G_1,G_2,G_3,G_4$ are connected and asymmetric, and in $G_i$ the vertices $x_i$ and $x_5$ are pseudosimilar for $i=1,2,3,4$.  Hence
\[
    G_1 \xrightarrow{-x_1} G_2
        \xrightarrow{-x_2} G_3
        \xrightarrow{-x_3} G_4
        \xrightarrow{-x_4} G_5
\]
is an active pseudosimilarity chain of length four, so $\psd(G_1)\ge4$.  This is the smallest instance of the present clock construction, not a claim of global minimality for depth four; the bound~\eqref{eq:trivial-upper} only forces order at least $11$.
\end{example}

\par\noindent The verification is given in Appendix~\ref{app:example-seventeen}.

Figure~\ref{fig:G17} shows $G_1$.  The six vertices $a_i$ form the outer cycle, the vertices $b_i$ form the inner layer, and the surviving $x$-vertices are the five pendant leaves.  The missing leaf at $a_0$ marks a break in a hidden cyclic orbit.  Each active deletion moves one endpoint of this break by one position; the resulting card is isomorphic to the card obtained by deleting the fixed vertex $x_5$, although no nontrivial automorphism survives in the current graph.

\begin{figure}[t]
\centering
\begin{tikzpicture}[scale=1.05, every node/.style={font=\small}]
  \tikzset{
    av/.style={circle, draw, inner sep=1.5pt, minimum size=18pt},
    bv/.style={circle, draw, inner sep=1.2pt, minimum size=16pt},
    xv/.style={circle, draw, inner sep=1.2pt, minimum size=16pt},
    del/.style={circle, draw, dashed, inner sep=1.2pt, minimum size=16pt}
  }
  \foreach \i/\ang in {0/90,1/30,2/-30,3/-90,4/-150,5/150}{
    \node[av] (a\i) at (\ang:3.2) {$a_{\i}$};
    \node[bv] (b\i) at ({\ang-10}:1.90) {$b_{\i}$};
  }
  \foreach \i/\ang in {1/30,2/-30,3/-90,4/-150,5/150}{
    \node[xv] (x\i) at (\ang:4.55) {$x_{\i}$};
  }
  \node[del] (x0) at (90:4.55) {$x_{0}$};
  \node at (90:5.2) {deleted};
  \foreach \u/\v in {0/1,1/2,2/3,3/4,4/5,5/0}{
    \draw (a\u) -- (a\v);
  }
  \foreach \i in {1,2,3,4,5}{
    \draw (a\i) -- (x\i);
  }
  \foreach \i/\j/\k in {0/1/3,1/2/4,2/3/5,3/4/0,4/5/1,5/0/2}{
    \draw (b\i) -- (a\i);
    \draw (b\i) -- (a\j);
    \draw (b\i) -- (a\k);
  }
\end{tikzpicture}
\caption{The $17$-vertex starting graph $G_1$.  The dashed vertex $x_0$ has already been deleted and is shown only to mark the initial gap.  Starting from $G_1$, delete $x_1,x_2,x_3,x_4$ in that order.  Immediately before deleting $x_i$ ($i=1,2,3,4$), the active pseudosimilar pair is $(x_i,x_5)$: deleting $x_i$ or $x_5$ gives isomorphic cards, while the current graph $G_i$ is asymmetric.  Thus the four successive pairs are $(x_1,x_5)$, $(x_2,x_5)$, $(x_3,x_5)$, and $(x_4,x_5)$.}
\label{fig:G17}
\end{figure}

The example contains the full mechanism used below.  A cyclic automorphism exists before the first leaf is removed; deleting a consecutive interval destroys that automorphism while retaining card isomorphisms at the two endpoints of the gap.

\section{One cyclic clock}

The preceding example is the case $m=6$ of the following general construction. Fix an integer $m\ge6$. Define a graph $Q_m$ with vertices
\[
    A=\{a_0,\ldots,a_{m-1}\},\qquad
    B=\{b_0,\ldots,b_{m-1}\},
\]
with indices modulo $m$. The vertices in $A$ induce the cycle
\[
    a_0a_1\cdots a_{m-1}a_0,
\]
the set $B$ is independent, and
\[
    N(b_i)=\{a_i,a_{i+1},a_{i+3}\}.
\]
We call $Q_m$ the $m$-clock.

\begin{lemma}\label{lem:clock-aut}
For every $m\ge6$,
\[
    \Aut(Q_m)\cong C_m,
\]
generated by the rotation $a_i\mapsto a_{i+1}$, $b_i\mapsto b_{i+1}$.
\end{lemma}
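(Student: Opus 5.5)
The plan is to show that every automorphism of $Q_m$ preserves $A$ and $B$ setwise, restricts to an automorphism of the cycle on $A$, and is determined by that restriction. The possible restrictions are then confined to the dihedral group $D_m$, and a combinatorial check rules out the reflections.

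\textbf{Degrees separate $A$ from $B$.} Each $b_i$ has degree $3$. Each $a_i$ has its two cycle neighbours together with the three vertices $b_i, b_{i-1}, b_{i-3}$, which are distinct since $m\ge 4$, so $\deg a_i = 5$. Hence any $\sigma\in\Aut(Q_m)$ maps $A$ to $A$ and $B$ to $B$. The induced subgraph on $A$ is the cycle $a_0\cdots a_{m-1}$, so $\sigma|_A$ is an automorphism of $C_m$ and lies in the dihedral group of order $2m$.

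\textbf{The action on $A$ is faithful.} The neighbourhoods $N(b_i)=\{a_i,a_{i+1},a_{i+3}\}$ are pairwise distinct triples for $m\ge 6$; for instance, $i$ is recovered as the unique index whose triple has gaps $1,2$ starting there. Therefore an automorphism fixing $A$ pointwise fixes every $b_i$. It follows that $\Aut(Q_m)$ embeds in $D_m$ via $\sigma\mapsto\sigma|_A$. The rotation $a_i\mapsto a_{i+1}$, $b_i\mapsto b_{i+1}$ preserves all edges by construction, so $C_m\le \Aut(Q_m)$.

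\textbf{Excluding reflections.} This is the only real point. A reflection of $A$ has the form $a_i\mapsto a_{c-i}$. It would have to permute the family of triples $\{i,i+1,i+3\}$, but it sends $\{i,i+1,i+3\}$ to $\{j,j+2,j+3\}$ with $j=c-i-3$. Read cyclically, the gaps of $\{k,k+1,k+3\}$ are the cyclic sequence $(1,2,m-3)$, while those of $\{j,j+2,j+3\}$ are $(2,1,m-3)$. For $m\ge 6$ we have $m-3\ge 3$, so the three gaps are distinct. The two sequences are then reverses of each other and are not cyclic rotations of one another. Hence no image triple is of the required form, and no reflection extends to an automorphism. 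This gives $\Aut(Q_m)\cong C_m$, generated by the rotation.

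I expect this last step to be the only place where the hypothesis $m\ge 6$ genuinely matters. For $m\le 5$ the gap $m-3$ coincides with $1$ or $2$, and the chirality argument fails.
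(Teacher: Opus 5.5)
Your proof is correct and takes the same route as the paper: degrees separate $A$ from $B$, the restriction to the cycle is dihedral, rotations extend, and reflections are excluded because the reflected triples are not of the form $\{k,k+1,k+3\}$ (your cyclic gap-sequence check is equivalent to the paper's check that $-S$ is not a translate of $S=\{0,1,3\}$). Your faithfulness step, that the triples $N(b_i)$ are pairwise distinct so an automorphism is determined by its action on $A$, makes explicit a point the paper uses without stating.
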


\par\noindent The proof is given in Appendix~\ref{app:clock-aut}.

We now attach one vertex to each clock position. Let $H_m$ be obtained from $Q_m$ by adding an independent set
\[
    X=\{x_0,\ldots,x_{m-1}\}
\]
and the edges $x_i a_i$. The clock rotation extends to $H_m$ by $x_i\mapsto x_{i+1}$.

For $1\le i\le m-1$, set
\[
    G_i=H_m-\{x_0,\ldots,x_{i-1}\}.
\]

\begin{theorem}[One-clock chain]\label{thm:one-clock}
For every $m\ge6$ and every $1\le i\le m-2$, the vertices $x_i$ and $x_{m-1}$ are pseudosimilar in $G_i$. Moreover, each $G_i$ is asymmetric. Consequently the connected graph $G_1$ has
\[
    |V(G_1)|=3m-1,
    \qquad
    \psd(G_1)\ge m-2.
\]
\end{theorem}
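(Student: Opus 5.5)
The plan is to prove removal-similarity from Lemma~\ref{lem:sliding-gap}, prove asymmetry of each $G_i$ by reducing to the clock, and then read off the chain.

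\textbf{Removal-similarity.} Apply Lemma~\ref{lem:sliding-gap} with $H=H_m$, $L=m$ and $\rho$ the rotation $a_j\mapsto a_{j+1}$, $b_j\mapsto b_{j+1}$, $x_j\mapsto x_{j+1}$. This gives that $x_i$ and $x_{m-1}$ are removal-similar in $G_i$ for $1\le i\le m-2$. Concretely, $G_i-x_i$ deletes the interval $x_0,\dots,x_i$, while $G_i-x_{m-1}$ deletes the interval $x_{m-1},x_0,\dots,x_{i-1}$. These are rotations of one another by $\rho$.

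\textbf{Asymmetry of $G_i$.} Pseudosimilarity then follows once we know $G_i$ is asymmetric, since $x_i\neq x_{m-1}$ cannot be similar in an asymmetric graph. So the main step is asymmetry of every $G_i$, $1\le i\le m-1$.

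First we show any automorphism $\sigma$ of $G_i$ preserves the partition into $A$, $B$ and the surviving $X$-vertices. The degrees are as follows:
\begin{itemize}[nosep]
\item each $x_j$ has degree $1$;
\item each $b_j$ has degree $3$;
\item $a_j$ has degree $2+3+\epsilon_j$, where $\epsilon_j\in\{0,1\}$ records whether $x_j$ survives. Here $a_j$ lies in exactly three of the triples $N(b_k)$, namely for $k=j,\,j-1,\,j-3$.
\end{itemize}
So $a$-vertices have degree $5$ or $6$, and the leaves and $b$'s are separated by degree. Hence $\sigma$ fixes $A$, $B$ and $X\cap V(G_i)$ setwise.

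Next, $\sigma$ restricts to an automorphism of the subgraph induced on $A\cup B$, which is $Q_m$. By Lemma~\ref{lem:clock-aut} this restriction is a rotation by some $t$. The vertex $\sigma$ must also map the set $S=\{j: x_j\in V(G_i)\}=\{i,\dots,m-1\}$ to itself, because the leaves hang off exactly the $a_j$ with $j\in S$. Thus $S+t=S$ modulo $m$. But $S$ is a proper nonempty interval of $\Z_m$ (its size is $m-i$, with $1\le m-i\le m-1$), so its only stabilizing rotation is $t=0$. Indeed, the nonempty complement is an interval too, and a rotation preserving $S$ must preserve its unique "start" point $i$, the element of $S$ whose predecessor is not in $S$.

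Finally, once the rotation is trivial on $A\cup B$, each leaf is determined by its unique neighbour, so $\sigma$ fixes each leaf as well. Hence $\sigma$ is the identity.

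The only delicate point is using Lemma~\ref{lem:clock-aut} on the induced subgraph, which relies on the degree separation above. That separation holds because the $A$-degrees $5,6$ differ from $1$ and $3$.

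\textbf{Consequences.} $G_1$ has $2m+(m-1)=3m-1$ vertices. It is connected, since $Q_m$ is connected (the cycle on $A$ with each $b_j$ attached to it) and every leaf hangs on $A$. For the chain, take $v_j=x_j$ for $j=1,\dots,m-2$. The graph after $j-1$ deletions from $G_1$ is exactly $G_j$, and in $G_j$ the deleted vertex $x_j$ has pseudosimilar mate $x_{m-1}$. So this is an active chain of length $m-2$, giving $\psd(G_1)\ge m-2$.
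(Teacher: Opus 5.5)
Your proof is correct and follows essentially the same route as the paper: removal-similarity comes from the sliding-gap lemma, and asymmetry comes from degree separation of $A$, $B$ and the surviving leaves, Lemma~\ref{lem:clock-aut} forcing a rotation, and the trivial translation stabilizer of the proper cyclic interval $\{i,\dots,m-1\}$. Your extra details (the exact $A$-degrees $5$ or $6$, the unique start point of the interval, and each leaf being fixed by its unique neighbour) only spell out steps the paper states briefly.
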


\par\noindent The proof is given in Appendix~\ref{app:one-clock}.

The one-clock construction gives linear depth, but only about one third of its vertices are active. To reduce the overhead, we encode each active vertex simultaneously on two clocks.

\section{Two clocks and a square-root deficit}

Let $p,q\ge6$ be distinct coprime integers and put
\[
    L=pq.
\]
Take disjoint copies $Q_p$ and $Q_q$ of the clocks above. Their $A$-vertices will be denoted by $a^{(p)}_j$ and $a^{(q)}_j$. Add an independent set
\[
    X=\{x_0,\ldots,x_{L-1}\},
\]
and join $x_k$ to
\[
    a^{(p)}_{k\bmod p}
    \quad\text{and}\quad
    a^{(q)}_{k\bmod q}.
\]
Call the resulting graph $H_{p,q}$. Simultaneously rotating both clocks by one step and sending $x_k$ to $x_{k+1}$ is an automorphism of order $L$.

For $1\le i\le L-1$, define
\[
    G_i=H_{p,q}-\{x_0,\ldots,x_{i-1}\}.
\]

\begin{theorem}[Two-clock chain]\label{thm:two-clock}
For $p,q\ge6$ distinct and coprime, every graph $G_i$, $1\le i\le L-1$, is connected and asymmetric. For $1\le i\le L-2$, the vertices $x_i$ and $x_{L-1}$ are pseudosimilar in $G_i$. In particular,
\[
    |V(G_1)|=pq+2p+2q-1
\]
and
\[
    \psd(G_1)\ge pq-2.
\]
\end{theorem}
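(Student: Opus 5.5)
Removal-similarity comes straight from the Sliding-gap lemma (Lemma~\ref{lem:sliding-gap}). Take $H=H_{p,q}$ and let $\rho$ be the simultaneous rotation of both clocks together with $x_k\mapsto x_{k+1}$. Its orbit $x_0,\ldots,x_{L-1}$ has length $L=pq$: since $\gcd(p,q)=1$, the map $k\mapsto(k\bmod p,k\bmod q)$ is a bijection $\Z_L\to\Z_p\times\Z_q$, so $\rho$ has order exactly $L$ on $X$. The lemma then gives $G_i-x_i\cong G_i-x_{L-1}$ for $1\le i\le L-2$. Pseudosimilarity follows once $G_i$ is asymmetric, since then $x_i\ne x_{L-1}$ cannot be similar. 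The order count is $2p+2q$ clock vertices plus $pq-1$ surviving $x$'s. The chain $x_1,\ldots,x_{L-2}$ then has length $pq-2$, which gives $\psd(G_1)\ge pq-2$.

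Connectivity is routine. Each $Q_p$ is connected, since $A$ is a cycle and each $b$ hangs on it. The two clocks are linked by any surviving $x_k$, and every $x_k$ has its two neighbours in the clocks. Here $i\le L-1$ leaves at least $x_{L-1}$.

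The main step is asymmetry of $G_i$, and I would argue it in three stages.

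\emph{Step 1: $\phi$ preserves each clock's $A$ and $B$.} Let $\phi\in\Aut(G_i)$. I would show that $\phi$ maps $A^{(p)}\cup A^{(q)}$, the $B$'s, and $X$ to themselves, using degrees and local structure. The $x$'s have degree $2$ and their two neighbours are in different clocks. The $b$'s have degree $3$, with neighbours $a_j,a_{j+1},a_{j+3}$ on one cycle. A vertex $a^{(p)}_j$ has degree $2+3+(\text{number of surviving }x_k\text{ with }k\equiv j\bmod p)$. This number is about $q$, and is at least $q-1\ge5$ in the worst case when many $x$'s are deleted, but it may drop. So I would rather use a structural invariant than degree alone. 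I would characterize $B$ as exactly the vertices of degree $3$ whose neighbourhood contains two adjacent vertices, and $A$ as the vertices lying on triangles $a_ja_{j+1}b_j$. An $x$-vertex has two nonadjacent neighbours lying in different clocks, so it lies on no triangle. Since each clock's $A\cup B$ induces a connected subgraph with no $X$ vertices, and $\phi$ preserves $A\cup B$, $\phi$ permutes the two clock components of $G_i[A\cup B]$. These components have different orders $2p\ne 2q$, so $\phi$ fixes each clock setwise.

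\emph{Step 2: $\phi$ acts as a rotation $(s,t)$ of the two clocks.} Each clock subgraph is $Q_p$ (resp.\ $Q_q$), so by Lemma~\ref{lem:clock-aut}, $\phi$ acts on it as a rotation by $s\in\Z_p$ (resp.\ $t\in\Z_q$). A surviving $x_k$ is determined by its neighbour pair, so $\phi(x_k)$ is the vertex with neighbours $a^{(p)}_{k+s},a^{(q)}_{k+t}$. By CRT this is $x_{k+r}$ for the unique $r\in\Z_L$ with $r\equiv s \pmod p$ and $r\equiv t\pmod q$.

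\emph{Step 3: $r=0$.} Thus $\phi$ restricts to translation by $r$ on $\Z_L$, and this translation must preserve the set of surviving indices $\{i,\ldots,L-1\}$. That set is a proper nonempty interval, and $i\ge1$. A translation preserving a proper nonempty cyclic interval must be trivial, because it also preserves the complementary interval $\{0,\ldots,i-1\}$ and so must map each interval's starting point to itself. Hence $r=0$, so $s=t=0$, and $\phi$ is the identity.

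The obstacle I expect is Step 1. The clean vertex-type recognition must hold in every $G_i$, including when few $x$'s remain and $A$-degrees drop to $5$. The triangle/degree-3 characterization is meant to avoid relying on degrees.
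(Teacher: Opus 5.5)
Your proposal is correct and follows the same route as the paper: the sliding-gap lemma gives removal-similarity, then you recognize $X$, $B$, $A$ and the two clocks (which cannot be swapped since $2p\ne 2q$), get rotations via Lemma~\ref{lem:clock-aut}, combine them by the Chinese remainder theorem into one translation of $\Z/L\Z$, and use the trivial stabilizer of the surviving interval. Your triangle-based recognition in Step~1 is valid but unnecessary, since each $A$-vertex already has degree $5$ inside its own clock (neighbours $a_{j\pm1}$ and $b_j,b_{j-1},b_{j-3}$), so the degrees $2$, $3$, and at least $5$ separate $X$, $B$, $A$ in every $G_i$ however many $x$'s have been deleted — which is exactly the paper's argument.
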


\par\noindent The proof is given in Appendix~\ref{app:two-clock}.

Taking consecutive integers already gives the advertised square-root deficit.

\begin{corollary}\label{cor:sqrt-sequence}
For every $m\ge6$ there is a connected asymmetric graph $G$ on
\[
    n_m=m^2+5m+1
\]
vertices with
\[
    \psd(G)\ge m^2+m-2=n_m-4m-3.
\]
Consequently, along this sequence of orders,
\[
    \psd(G)=n_m-O(\sqrt{n_m}).
\]
\end{corollary}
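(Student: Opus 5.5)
We apply Theorem~\ref{thm:two-clock} with the consecutive integers $p=m$ and $q=m+1$. For $m\ge6$ both integers are at least $6$, they are distinct, and they are coprime, because any common divisor divides their difference $1$. So the hypotheses of Theorem~\ref{thm:two-clock} hold, and we take $G=G_1$ built from $H_{m,m+1}$.

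That theorem gives directly that $G_1$ is connected and asymmetric, and it supplies the vertex count and the depth bound. We only need to evaluate these expressions:
\[
|V(G_1)|=pq+2p+2q-1=m(m+1)+2m+2(m+1)-1=m^2+5m+1=n_m,
\]
\[
\psd(G_1)\ge pq-2=m^2+m-2.
\]
Subtracting, $n_m-(m^2+m-2)=4m+3$, so the bound can be written as $n_m-4m-3$.

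For the asymptotic statement, note that $n_m\ge m^2$, so $m\le\sqrt{n_m}$. Hence the deficit satisfies $4m+3\le 7\sqrt{n_m}$, which gives $\psd(G)\ge n_m-O(\sqrt{n_m})$ along this sequence.

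There is no real obstacle: all the graph-theoretic content (asymmetry, pseudosimilarity, connectivity) is contained in Theorem~\ref{thm:two-clock}. The only points to check are that the choice $p=m$, $q=m+1$ meets its hypotheses and that the arithmetic above is correct.
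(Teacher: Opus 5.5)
Your proposal is correct and is the paper's own argument: the paper's proof simply applies Theorem~\ref{thm:two-clock} with $p=m$ and $q=m+1$. Beyond that, you spell out the coprimality check, the arithmetic $n_m-(m^2+m-2)=4m+3$, and the estimate $4m+3\le 7\sqrt{n_m}$, all of which the paper leaves implicit.
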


\par\noindent The proof is given in Appendix~\ref{app:sqrt-sequence}.

The same estimate can be made uniform in the order without sacrificing connectedness. For a graph $F$ and $t\ge0$, let $K_t\vee F$ denote the join obtained by adding $t$ new vertices forming a clique and adjacent to every vertex of $F$.

\begin{lemma}[Join padding]\label{lem:join-padding}
If $u,v$ are pseudosimilar in $F$, then they are pseudosimilar in $K_t\vee F$ for every $t\ge0$, provided $F$ has no universal vertex. More generally, if no graph appearing along an active pseudosimilarity chain in $F$ has a universal vertex, then the chain remains active after joining the same $K_t$ at every stage.
\end{lemma}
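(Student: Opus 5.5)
Write $J=K_t\vee F$ and let $C=V(K_t)$. The whole argument rests on one fact: if $F$ has no universal vertex, then the universal vertices of $J$ are exactly the vertices of $C$. Each vertex of $C$ is adjacent to all others, so it is universal. A vertex $w\in V(F)$ is universal in $J$ iff it is universal in $F$, which is excluded. Hence every automorphism of $J$ maps $C$ onto $C$. It therefore restricts to an automorphism of $F$, and conversely every automorphism of $F$ extends to $J$ by fixing $C$ pointwise. This gives, for $u,v\in V(F)$, that $u,v$ are similar in $J$ iff they are similar in $F$.

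For removal-similarity, note that $J-u=K_t\vee(F-u)$. An isomorphism $F-u\cong F-v$ extends by the identity on $C$ to $J-u\cong J-v$. This direction needs no hypothesis. Combining it with the previous paragraph, if $u,v$ are pseudosimilar in $F$, they remain removal-similar in $J$ and remain non-similar there. So they are pseudosimilar in $J$, which proves the first assertion.

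For the chain statement, let $v_1,\dots,v_k$ be active in $F$ with intermediate graphs $F_j=F-\{v_1,\dots,v_j\}$ for $0\le j\le k-1$. Then
\[
    J-\{v_1,\dots,v_j\}=K_t\vee F_j,
\]
so the graphs along the chain in $J$ are exactly the joins $K_t\vee F_j$. By hypothesis no $F_j$ has a universal vertex, so the first part applies at each stage. Thus $v_{j+1}$ has its pseudosimilar mate from $F_j$ also in $K_t\vee F_j$, and the chain $v_1,\dots,v_k$ is active in $J$.

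The only delicate step is the universal-vertex characterization. One must confirm that the hypothesis is needed only for the graphs at which a pseudosimilarity is asserted, namely $F_0,\dots,F_{k-1}$, and not for the final graph $F_k$. One should also handle the trivial case $t=0$. Both are routine.
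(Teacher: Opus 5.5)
Your proof is correct and follows essentially the same argument as the paper. The card isomorphism $F-u\cong F-v$ extends by the identity on $K_t$. The hypothesis makes the added clique exactly the set of universal vertices, so every automorphism of the join restricts to one of $F$. The chain version is the same argument applied to each $K_t\vee F_j$.
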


\par\noindent The proof is given in Appendix~\ref{app:join-padding}.

Our clock graphs have no universal vertices, and the same remains true throughout the active chain. We obtain the following global estimate.

\begin{theorem}[Uniform asymptotic depth]\label{thm:uniform}
There is an absolute constant $C$ such that, for all sufficiently large $n$, there is a connected $n$-vertex graph $G$ with
\[
    \psd(G)\ge n-C\sqrt n.
\]
Consequently,
\[
    \lim_{n\to\infty}\frac{\PS(n)}{n}=1.
\]
\end{theorem}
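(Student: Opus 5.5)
Given large $n$, we take the two-clock graphs of Corollary~\ref{cor:sqrt-sequence} and pad them with a clique join (Lemma~\ref{lem:join-padding}) to reach exactly $n$ vertices.

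\textbf{Choosing $m$.} Let $m$ be the largest integer with $m\ge6$ and $n_m=m^2+5m+1\le n$. This exists once $n\ge n_6=67$. Maximality gives $n<n_{m+1}$, so
\[
t:=n-n_m<n_{m+1}-n_m=2m+6 .
\]
Since $m\le\sqrt n$, we get $t=O(\sqrt n)$. Let $F=G_1$ be the graph of Corollary~\ref{cor:sqrt-sequence}: the two-clock graph $H_{m,m+1}-x_0$, which is connected, has $n_m$ vertices, and has an active chain of length $m^2+m-2$. Set $G=K_t\vee F$. Then $|V(G)|=n$, and $G$ is connected because $F$ already is.

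\textbf{Verifying no universal vertex.} To apply Lemma~\ref{lem:join-padding}, we must check that no graph $F_j=G_j$, for $1\le j\le L-2$ with $L=m(m+1)$, has a universal vertex. This is a degree count. Each $G_j$ has at least $2m+2(m+1)+2\ge 28$ vertices. An $x_k$ has degree $2$. A $b$-vertex has degree $3$. An $a^{(p)}$-vertex has degree $2$ (cycle) plus the number of $b$'s adjacent to it, which is $3$, plus at most $L/p$ vertices of $X$. The largest of these is $5+(m+1)$, which is below $|V(G_j)|-1$, since $|V(G_j)|\ge 2m+2(m+1)+2$ because $x_{L-1}$ and $x_{j}$ survive along with both clocks. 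So no vertex is universal. The lemma then says the chain $x_1,\dots,x_{L-2}$ stays active in $G$, with $G_j$ replaced by $K_t\vee G_j$.

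\textbf{Concluding.} This gives
\[
\psd(G)\ge m^2+m-2=n_m-4m-3=n-t-4m-3\ge n-6m-9\ge n-C\sqrt n
\]
for a suitable absolute constant $C$ (for instance $C=7$ for large $n$), using $m\le\sqrt n$. Combining with the upper bound~\eqref{eq:trivial-upper}, $n-C\sqrt n\le\PS(n)\le n-7$, so $\PS(n)/n\to1$.

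\textbf{Main obstacle.} The only delicate point is the hypothesis of Lemma~\ref{lem:join-padding}: the absence of universal vertices must hold at every stage of the chain, not just initially. This is why we rely on the degree bound above, which holds uniformly in $j$ because deletions only lower degrees while the two clocks remain intact, keeping the order comfortably larger than any degree.
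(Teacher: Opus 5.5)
Your proposal is correct and follows the paper's own proof: choose the largest $m$ with $n_m\le n$, pad the two-clock graph of Corollary~\ref{cor:sqrt-sequence} by $K_t$ using Lemma~\ref{lem:join-padding}, and use $t<2m+6$ to get a deficit of $O(\sqrt n)$. Your explicit degree count (maximum degree $m+6$ against at least $4m+4$ vertices in every $G_j$ with $1\le j\le L-2$) is a valid verification of the no-universal-vertex hypothesis; the paper only asserts this point in the main text.
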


\par\noindent The proof is given in Appendix~\ref{app:uniform}.

The join padding creates automorphisms among the added universal vertices. The next construction removes that loss on an infinite sequence of orders while substantially improving the deficit.

\section{Many clocks and a polylogarithmic deficit}

Fix $r\ge4$ and distinct pairwise coprime integers
\[
    m_1,\ldots,m_r\ge6,
\]
and write
\[
    L=\prod_{j=1}^r m_j.
\]
Take disjoint clocks $Q_{m_1},\ldots,Q_{m_r}$. Add vertices
\[
    X=\{x_0,\ldots,x_{L-1}\}
\]
and, for each $k$, join $x_k$ to the $r$ vertices
\[
    a^{(m_j)}_{k\bmod m_j},\qquad j=1,\ldots,r.
\]
Let $H$ be the resulting graph and
\[
    G_i=H-\{x_0,\ldots,x_{i-1}\},
    \qquad 1\le i\le L-1.
\]

\begin{theorem}[Many-clock chain]\label{thm:many-clock}
With the construction above, every $G_i$ is connected and asymmetric. For $1\le i\le L-2$, the vertices $x_i$ and $x_{L-1}$ are pseudosimilar in $G_i$. Thus the connected asymmetric graph $G_1$ satisfies
\[
    |V(G_1)|=L+2\sum_{j=1}^r m_j-1
\]
and
\[
    \psd(G_1)\ge L-2
    =|V(G_1)|-2\sum_{j=1}^r m_j-1.
\]
Moreover, every graph encountered in this active chain is asymmetric.
\end{theorem}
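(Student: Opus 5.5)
Three things have to be shown: removal-similarity of $x_i$ and $x_{L-1}$ in $G_i$, asymmetry of every $G_i$, and connectivity; the vertex count and the bound on $\psd$ then follow. Removal-similarity comes straight from the Sliding-gap Lemma~\ref{lem:sliding-gap}. Let $\rho$ be the simultaneous rotation of all $r$ clocks together with $x_k\mapsto x_{k+1}$. This is an automorphism of $H$, since $x_k$ is adjacent to $a^{(m_j)}_{k\bmod m_j}$ and $x_{k+1}$ is adjacent to $a^{(m_j)}_{k+1\bmod m_j}$. The $x_k$ form a single $\rho$-orbit of length $L$. Once asymmetry is known, removal-similarity upgrades to pseudosimilarity, because in an asymmetric graph no two distinct vertices are similar. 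This gives the active chain $x_1,\ldots,x_{L-2}$ starting from $G_1$. The order is $|V(G_1)|=L+2\sum m_j-1$, since each clock contributes $2m_j$ vertices and one $x$ has been removed. Connectivity holds because each clock $Q_{m_j}$ is connected (it contains the cycle $A$ and every $b$ is attached to it), each surviving $x_k$ meets every clock, and at least one $x$ survives in $G_i$ when $i\le L-1$.

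The main work is asymmetry. Let $\varphi\in\Aut(G_i)$. First I would show that $\varphi$ preserves the partition into the $x$-vertices and each clock's $A$-set and $B$-set, using degrees and local structure.
\begin{itemize}
\item A vertex $b$ has degree $3$, and all its neighbours lie in $A$ of its own clock.
\item A vertex $x$ has degree $r\ge4$, with its neighbours in $r$ distinct clocks.
\item A vertex $a^{(m_j)}_s$ has degree $2+3+|\{k\notin\{0,\ldots,i-1\}: k\equiv s \bmod m_j\}|$, which is large in most cases.
\end{itemize}
Degree alone may fail when many $x$'s have been deleted, so I would use a structural test instead. The $b$-vertices are exactly the vertices lying in a triangle with both other vertices in its own neighbourhood structure. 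For instance, $b_i a_i a_{i+1}$ is a triangle, while an $x$-vertex lies in no triangle: its neighbours sit in different clocks and are pairwise nonadjacent. The $A$-vertices are the vertices with two neighbours inside triangles, and so on. Alternatively, I would characterize $X$ as the set of vertices none of whose neighbours are adjacent to each other, and the clocks as the components of $G_i-X$. In that picture, $G_i-X$ is the disjoint union of the clocks, and $X$ is the set of vertices lying in no triangle and no short clock-cycle. Then $\varphi$ permutes the components of $G_i-X$. Components of different orders $2m_j$ cannot be swapped, since the $m_j$ are distinct, so $\varphi$ fixes each clock setwise. By Lemma~\ref{lem:clock-aut}, $\varphi$ acts on $Q_{m_j}$ as a rotation by some $t_j$. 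An $x$-vertex is determined by its neighbourhood, which is its residue vector, and CRT makes the residue map injective. Hence $\varphi$ acts on the surviving $x_k$ as $k\mapsto k+t$, where $t$ is the unique residue mod $L$ with $t\equiv t_j \pmod{m_j}$ for all $j$. It must therefore map the surviving set $\{i,\ldots,L-1\}$, an interval of length $L-i$ in $\Z_L$ with $1\le L-i\le L-1$, onto itself. A proper nonempty interval in $\Z_L$ is invariant only under the trivial translation, so $t=0$, hence every $t_j=0$ and $\varphi$ is the identity.

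The step I expect to be hardest is the rigorous invariance of $X$ under automorphisms when $i$ is large. In that regime some $a$-vertices may have no $x$-neighbours left, and degrees coincide across types. My first tool is the triangle criterion: every $a$- and $b$-vertex lies in a triangle $a_sa_{s+1}b_s$, while no $x$-vertex does, since $X$ is independent and its neighbours lie in distinct clocks whose $a$'s are pairwise nonadjacent. This criterion is independent of $i$, so it handles the whole chain uniformly.
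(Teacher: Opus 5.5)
Your proof is correct. It follows the paper's skeleton: the sliding-gap lemma gives removal-similarity; each clock and $X$ are shown invariant; Lemma~\ref{lem:clock-aut} yields rotations $t_j$; the Chinese remainder theorem combines them into one translation $t$ of the surviving indices; and a proper interval has trivial stabilizer.

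The one genuine difference is how you show that $X\cap V(G_i)$ is invariant. The paper argues by degrees. $B$ is exactly the set of degree-$3$ vertices, because every $A$-vertex keeps its five clock neighbours and each $x_k$ has degree $r\ge4$. Then $A$ is the set of non-$B$ vertices with a $B$-neighbour, and $X$ is what remains. You instead note that every $a$- and $b$-vertex lies in a triangle $a_sa_{s+1}b_s$, while $N(x_k)$ is independent (one neighbour per clock, no edges between clocks). So $X\cap V(G_i)$ is exactly the set of vertices in no triangle, and the clocks are the components of $G_i-X$.

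What your criterion buys is that it never uses $r\ge4$. It therefore also covers $r=3$, where $x$-vertices and $B$-vertices both have degree $3$ and the paper's first step fails. It also handles the one- and two-clock theorems by the same argument. The paper's version is slightly more concrete.

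Two small remarks. First, your worry that ``degree alone may fail'' is justified only for separating $A$ from $X$ when $r\ge5$; degree does identify $B$ for every $i$. Second, the intermediate characterizations you float (the unclear criterion for $b$-vertices, and ``no short clock-cycle'') are unnecessary. Once $X$ is invariant you never need to separate $A$ from $B$, since Lemma~\ref{lem:clock-aut} applies to the whole restricted clock.
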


\par\noindent The proof is given in Appendix~\ref{app:many-clock}.

Taking the moduli to be small distinct primes makes the product much larger than their sum. The standard prime estimates needed below follow, for example, from \citet{RosserSchoenfeld1962}.

\begin{corollary}[Polylogarithmic deficit]\label{cor:polylog}
There is an infinite sequence of connected asymmetric graphs $G$ such that
\[
    \psd(G)
    \ge |V(G)|-
    O\!\left(\frac{(\log |V(G)|)^2}{\log\log |V(G)|}\right).
\]
Every intermediate graph in the corresponding active chain is connected and asymmetric.
\end{corollary}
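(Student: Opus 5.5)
We would deduce the corollary directly from Theorem~\ref{thm:many-clock} by taking the moduli to be consecutive primes. For each $r\ge4$, let $m_1,\ldots,m_r$ be the $r$ consecutive primes beginning with $7$, so every $m_j\ge6$, they are distinct, and they are pairwise coprime. Let $G=G_1$ be the graph of Theorem~\ref{thm:many-clock} for these moduli. That theorem already says that $G$ and every graph in the active chain $G_1,\ldots,G_{L-1}$ are connected and asymmetric, and that
\[
\psd(G)\ge |V(G)|-2S-1,\qquad S=\sum_{j=1}^r m_j .
\]
Since distinct $r$ give distinct $L$, this yields an infinite sequence of graphs. It therefore remains only to show that $S=O\bigl((\log n)^2/\log\log n\bigr)$, where $n=|V(G)|$.

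For this we use standard prime estimates, e.g.\ \citet{RosserSchoenfeld1962}. Let $P=m_r$ be the largest modulus, so $P\sim r\log r$ by the prime number theorem. First, $S\le rP$, which gives $S=O(r^2\log r)$; more precisely, $S\sim P^2/(2\log P)$. Second, $\log L=\sum_{j}\log m_j=\theta(P)-\log(2\cdot3\cdot5)$, and Chebyshev's bound $\theta(P)\ge cP$ for large $P$ gives $\log L\ge c'P$. Since $n\ge L$, we get $P=O(\log n)$. Because $x^2/\log x$ is increasing for large $x$, it follows that
\[
S=O\!\left(\frac{P^2}{\log P}\right)=O\!\left(\frac{(\log n)^2}{\log\log n}\right).
\]
Finally, the deficit is $2S+1$, which is of the same order.

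The only step needing care is the bound $\log n\gg P$, and this is exactly where Chebyshev-type lower bounds for $\theta$ enter. A technical point is that we need $\log\log n\asymp\log P$. The upper bound $n\le L+2S\le 2L$ gives $\log n=O(P)$, and hence $\log\log n\le\log P+O(1)$. The quantity $(\log n)^2/\log\log n$ is monotone in $\log n$, so using only the lower bound $\log n\ge c'P$ is enough. There is no real obstacle beyond quoting these prime estimates, and the structural claims about connectivity and asymmetry are inherited verbatim from Theorem~\ref{thm:many-clock}.
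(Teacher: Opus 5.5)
Your proposal is correct and follows essentially the same route as the paper: take the moduli to be the primes from $7$ onward, inherit connectivity, asymmetry and the deficit $2S+1$ from Theorem~\ref{thm:many-clock}, and compare $\sum m_j$ with $\log\prod m_j$ via standard prime estimates. The only difference is cosmetic: you parametrize by the largest prime $P$ and use Chebyshev bounds plus monotonicity of $x^2/\log x$ to pass to $n$, whereas the paper parametrizes by $r$, writing $\log L_r=\Theta(r\log r)$ and $S_r=\Theta(r^2\log r)$, and uses $S_r=o(L_r)$ to replace $L_r$ by the order.
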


\par\noindent The proof is given in Appendix~\ref{app:polylog}.

\section{Relation to earlier pseudosimilarity constructions}

The construction above fits naturally into the group-theoretic picture of pseudosimilarity. Godsil and Kocay proved that finite pseudosimilar pairs can be understood through an automorphism of a larger graph whose orbit has been partially deleted \cite{GodsilKocay1982}. In our language, the ambient graph $H$ carries a long cyclic orbit $x_0,x_1,\ldots,x_{L-1}$. Passing to $G_i$ removes a consecutive block from that orbit. The intact cyclic motion no longer survives as an automorphism of $G_i$, but one step of that motion still gives an isomorphism between the two cards obtained by deleting the endpoints adjacent to the break. Thus the same hidden orbit generates pseudosimilarity at every level.

This differs from the classical problem of constructing a large mutually pseudosimilar set. In that problem one seeks many vertices $u_1,\ldots,u_k$ in a single graph with
\[
    G-u_1\cong\cdots\cong G-u_k
\]
while the $u_i$ lie in distinct automorphism orbits. Kimble, Schwenk, and Stockmeyer initiated systematic constructions of this type \cite{KimbleSchwenkStockmeyer1981}, Kocay related them to group actions on cosets \cite{Kocay1984}, and Lauri developed improved constructions and surveys of the extremal questions \cite{Lauri1996,Lauri1997,Lauri2003}. Our active chain may have only one distinguished pseudosimilar pair at a given level; what is large is the number of levels through which a pseudosimilar deletion remains available.

There is also a useful contrast with minimal pseudosimilarity. A graph containing a pseudosimilar pair can be minimal in the sense that no proper subgraph contains such a pair; this viewpoint already occurs in the structural theory of pseudosimilarity, particularly for trees \cite{HararyPalmer1966,KirkpatrickKlaweCorneil1983}. The recursive question studied here asks for the opposite behavior: graphs for which one can repeatedly move to a proper card while retaining an active deletion ambiguity. Theorem~\ref{thm:uniform} shows that this opposite behavior can persist for an asymptotically full number of deletions.

Finally, the parameter is distinct from deck overlap and reconstruction numbers. Large overlap asks whether two different graphs have many isomorphic cards, while $\psd(G)$ concerns nested cards of one graph. The constructions nevertheless share a common theme: a small structural core controls a much larger family of deletion equivalences. Recent common-card constructions of Ivanov exploit controlled symmetries of blown-up roles to create many cross-graph card isomorphisms \cite{Ivanov2026}; the present work is relevant to that line because our clock mechanism likewise concentrates many deletion isomorphisms into a small structural core, but does so along a nested sequence of cards within a single graph.

\section{Open problems}

The gap between the elementary upper bound \eqref{eq:trivial-upper} and our lower bounds is still substantial in additive terms. The most immediate question is whether the polylogarithmic remainder can be reduced to a constant.

\begin{problem}
Is there a constant $C$ and infinitely many graphs $G$ such that
\[
    \psd(G)\ge |V(G)|-C?
\]
Can this be achieved with $G$ connected and asymmetric, and with every intermediate graph asymmetric?
\end{problem}

The many-clock construction is naturally connected with the problem of realizing a cyclic motion of very large order on a small auxiliary set. For squarefree $L$, the clock sizes are the prime divisors of $L$, so the overhead reflects the familiar sum-versus-product economy of faithful permutation representations of cyclic groups. It would be interesting to know whether a different graph-theoretic realization of the hidden motion can beat this architecture asymptotically.

A second direction is to restrict the graph class.

\begin{problem}
Determine the maximum pseudosimilarity depth among trees, planar graphs, bounded-degree graphs, and regular graphs.
\end{problem}

Trees are particularly natural because the classical Harary--Palmer construction and the later characterization of Kirkpatrick, Klawe, and Corneil give strong structural information about pseudosimilar vertices in trees \cite{HararyPalmer1966,KirkpatrickKlaweCorneil1983,Lauri1997}. Our present construction uses cycles and vertices of growing degree, so it does not address this case.

A third question asks whether high depth forces a recognizable global structure. Godsil--Kocay theory shows that a single pseudosimilar pair is explained by a hidden automorphism in a suitable supergraph \cite{GodsilKocay1982}. In our examples one cyclic supergraph explains the entire chain simultaneously.

\begin{problem}
If $\psd(G)$ is linear in $|V(G)|$, must a positive fraction of an active chain be generated by a common ambient automorphism, or can long chains arise from unrelated local pseudosimilarities at successive levels?
\end{problem}

The latter possibility would represent a genuinely different mechanism from the clock construction.

\section{Conclusion}

Pseudosimilarity is usually viewed as a static ambiguity between two vertices of one graph. The active-depth viewpoint makes the ambiguity recursive. A broken cyclic orbit gives a particularly transparent mechanism: deleting one endpoint of the break moves the graph to the next level, where the new endpoint is again pseudosimilar to the fixed vertex on the other side. One clock gives a first linear construction, two clocks reduce the inactive part to square-root order, and many relatively prime clocks reduce it to polylogarithmic order along an infinite family. In particular, the maximum pseudosimilarity depth is asymptotic to the full order of the graph. This suggests that recursive deletion ambiguity is substantially more abundant than the static picture alone might indicate.

\clearpage
\appendix
\section{Deferred proofs}\label{app:proofs}

All proofs omitted from the main text are collected here in the order in which their statements appear.

\subsection{Proof of the elementary upper bound}\label{app:upper-bound}

\begin{proof}[Equation~\eqref{eq:trivial-upper}]
Immediately before the last active deletion, the current graph must contain a pseudosimilar pair.  The smallest graph containing such a pair has eight vertices.  Hence at most $n-7$ active deletions can occur in an $n$-vertex graph.
\end{proof}

\subsection{Proof of the sliding-gap lemma}\label{app:sliding-gap}

\begin{proof}[Lemma~\ref{lem:sliding-gap}]
The graph $G_i-x_i$ is obtained from $H$ by deleting
\[
    \{x_0,x_1,\ldots,x_i\},
\]
whereas $G_i-x_{L-1}$ is obtained by deleting
\[
    \{x_{L-1},x_0,x_1,\ldots,x_{i-1}\}.
\]
Applying $\rho$ to the second deleted set gives the first. Hence
\[
    G_i-x_{L-1}\cong G_i-x_i.
\]
\end{proof}

\subsection{Verification of the 17-vertex example}\label{app:example-seventeen}

\begin{proof}[Example~\ref{ex:seventeen}]
Connectivity is immediate.  Before $x_0$ is removed, simultaneous rotation of all subscripts is an automorphism.  For $1\le i\le4$, the rotation by one maps the deleted set
\[
    \{x_5,x_0,\ldots,x_{i-1}\}
\]
to
\[
    \{x_0,x_1,\ldots,x_i\}.
\]
Consequently $G_i-x_5\cong G_i-x_i$, so $x_i$ and $x_5$ are removal-similar.

It remains to rule out an automorphism exchanging them.  The surviving $x$-vertices have degree $1$, the $b$-vertices have degree $3$, and the $a$-vertices have degree at least $5$, so every automorphism preserves these three sets.  Its restriction to the graph on the $a$- and $b$-vertices is a rotation; this is the case $m=6$ of Lemma~\ref{lem:clock-aut}.  Such a rotation must preserve the surviving index interval $\{i,i+1,\ldots,5\}$, whose translation stabilizer in $\mathbb Z/6\mathbb Z$ is trivial.  Hence every $G_i$ is asymmetric.  The removal-similar vertices $x_i,x_5$ are therefore pseudosimilar for $i=1,2,3,4$, proving the claimed active chain.
\end{proof}

\subsection{Proof of the clock automorphism lemma}\label{app:clock-aut}

\begin{proof}[Lemma~\ref{lem:clock-aut}]
Every $a_i$ has degree $5$ and every $b_i$ has degree $3$, so every automorphism preserves $A$ and $B$. Its restriction to the cycle on $A$ is therefore a rotation or a reflection.

Every rotation extends to $Q_m$. It remains to exclude reflections. Write
\[
    S=\{0,1,3\}\subseteq \Z/m\Z.
\]
The $B$-neighborhoods are precisely the translates $i+S$. Under a reflection they would become translates of $-S=\{0,-1,-3\}$. Thus a reflection could extend only if $-S$ were a translate of $S$. Since $0\in -S$, the translating element would have to be one of $0,-1,-3$. A direct check shows that none of these gives equality when $m\ge6$. Hence no reflection extends, and the rotations are all the automorphisms.
\end{proof}

\subsection{Proof of the one-clock theorem}\label{app:one-clock}

\begin{proof}[Theorem~\ref{thm:one-clock}]
Removal-similarity follows from Lemma~\ref{lem:sliding-gap}.

It remains to prove asymmetry. The surviving vertices of $X$ have degree $1$, while the vertices of $B$ have degree $3$ and the vertices of $A$ have degree at least $5$. Hence every automorphism of $G_i$ preserves the three sets $A$, $B$, and $X\cap V(G_i)$. By Lemma~\ref{lem:clock-aut}, its restriction to $Q_m$ is a rotation by some $d\in\Z/m\Z$. Therefore it sends each surviving $x_j$ to $x_{j+d}$.

The surviving indices form the cyclic interval
\[
    I_i=\{i,i+1,\ldots,m-1\}.
\]
A proper nonempty cyclic interval has trivial stabilizer under translations of $\Z/m\Z$, so $I_i+d=I_i$ implies $d=0$. The automorphism is therefore the identity. Thus $G_i$ is asymmetric, and the removal-similar vertices $x_i,x_{m-1}$ are pseudosimilar.

Starting from $G_1$, successively delete
\[
    x_1,x_2,\ldots,x_{m-2}.
\]
At the moment $x_i$ is deleted, its pseudosimilar mate is $x_{m-1}$. This gives an active chain of length $m-2$.
\end{proof}

\subsection{Proof of the two-clock theorem}\label{app:two-clock}

\begin{proof}[Theorem~\ref{thm:two-clock}]
The graph is connected because every surviving $x_k$ joins the two connected clock components. Removal-similarity of $x_i$ and $x_{L-1}$ again follows from Lemma~\ref{lem:sliding-gap}.

For asymmetry, note that the surviving $x$-vertices have degree $2$, the $B$-vertices of the clocks have degree $3$, and every $A$-vertex has degree at least $5$. Hence an automorphism preserves $X$ and the two clock subgraphs. Since the clock orders $2p$ and $2q$ are different, the two clocks cannot be interchanged. By Lemma~\ref{lem:clock-aut}, the restrictions are rotations, say by $s$ modulo $p$ and by $t$ modulo $q$.

By the Chinese remainder theorem there is a unique $d\pmod{L}$ satisfying
\[
    d\equiv s\pmod p,
    \qquad
    d\equiv t\pmod q.
\]
The neighborhood code of $x_k$ therefore forces any surviving $x_k$ to map to $x_{k+d}$. Hence the surviving index set
\[
    I_i=\{i,i+1,\ldots,L-1\}
\]
must satisfy $I_i+d=I_i$. Its translation stabilizer is trivial, so $d=0$, whence $s=t=0$. Thus every automorphism is the identity.

The active chain is
\[
    x_1,x_2,\ldots,x_{L-2},
\]
with $x_{L-1}$ serving as a pseudosimilar mate throughout.
\end{proof}

\subsection{Proof of the square-root corollary}\label{app:sqrt-sequence}

\begin{proof}[Corollary~\ref{cor:sqrt-sequence}]
Apply Theorem~\ref{thm:two-clock} with $p=m$ and $q=m+1$.
\end{proof}

\subsection{Proof of the join-padding lemma}\label{app:join-padding}

\begin{proof}[Lemma~\ref{lem:join-padding}]
An isomorphism $F-u\cong F-v$ extends by the identity on $K_t$. The newly added vertices are precisely the universal vertices of the join, so every automorphism preserves them setwise and restricts to an automorphism of $F$. Thus an automorphism of the join mapping $u$ to $v$ would induce one in $F$, contrary to pseudosimilarity.
\end{proof}

\subsection{Proof of the uniform asymptotic theorem}\label{app:uniform}

\begin{proof}[Theorem~\ref{thm:uniform}]
Choose the largest $m$ for which $n_m=m^2+5m+1\le n$, and let $t=n-n_m$. Apply Lemma~\ref{lem:join-padding} to the graph in Corollary~\ref{cor:sqrt-sequence}. Since
\[
    n_{m+1}-n_m=2m+6,
\]
we have $t<2m+6$. Hence
\[
\begin{aligned}
    n-\psd(G)
    &\le t+4m+3\\
    &<6m+9
     =O(\sqrt n).
\end{aligned}
\]
The upper bound $\PS(n)\le n-7$ from \eqref{eq:trivial-upper} then gives the limit.
\end{proof}

\subsection{Proof of the many-clock theorem}\label{app:many-clock}

\begin{proof}[Theorem~\ref{thm:many-clock}]
Connectivity and removal-similarity are as before. We prove asymmetry.

Let $B$ be the union of the $B$-layers of all clocks. These are exactly the degree-$3$ vertices: each $x_k$ has degree $r\ge4$, while every $A$-vertex has degree at least $5$. Hence $B$ is invariant under automorphisms. The union $A$ of the $A$-layers is then recognizable as the set of vertices outside $B$ that have a neighbor in $B$; the $x$-vertices have no neighbors in $B$. Thus $X$ is invariant as well.

The induced graph on $A\cup B$ is the disjoint union of the clocks $Q_{m_j}$. Their orders are distinct, so each clock is preserved individually. Lemma~\ref{lem:clock-aut} implies that the restriction to $Q_{m_j}$ is a rotation by some $s_j\pmod{m_j}$. By the Chinese remainder theorem there is a unique $d\pmod L$ satisfying
\[
    d\equiv s_j\pmod{m_j}
    \qquad (j=1,\ldots,r).
\]
The neighborhood code of $x_k$ then forces $x_k\mapsto x_{k+d}$ whenever both vertices survive. Therefore the surviving interval
\[
    I_i=\{i,i+1,\ldots,L-1\}
\]
is invariant under translation by $d$. Again its stabilizer is trivial, so $d=0$ and all $s_j=0$. The automorphism is the identity.

The rest follows by deleting $x_1,x_2,\ldots,x_{L-2}$ in order and using $x_{L-1}$ as the mate at every stage.
\end{proof}

\subsection{Proof of the polylogarithmic-deficit corollary}\label{app:polylog}

\begin{proof}[Corollary~\ref{cor:polylog}]
Take $m_1,\ldots,m_r$ to be the first $r$ primes at least $7$. Put
\[
    L_r=\prod_{j=1}^r m_j,
    \qquad
    S_r=\sum_{j=1}^r m_j.
\]
The prime number theorem gives
\[
    \log L_r=\Theta(r\log r),
    \qquad
    S_r=\Theta(r^2\log r).
\]
Hence
\[
    S_r=O\!\left(\frac{(\log L_r)^2}{\log\log L_r}\right).
\]
Theorem~\ref{thm:many-clock} gives graphs of order $L_r+2S_r-1$ and depth at least $L_r-2$. Since $S_r=o(L_r)$, replacing $L_r$ by the graph order inside the logarithms changes only the implicit constant.
\end{proof}

\end{document}